\definecolor{darkblue}{rgb}{0.0,0,0.7}
\newcommand{\darkblue}{\color{darkblue}}
\definecolor{darkred}{rgb}{0.68,0,0}
\definecolor{darkgreen}{rgb}{0,.38,0}
\newcommand{\defn}[1]{\emph{\darkblue #1}}
\newcommand{\defnb}[1]{\emph{\darkblue #1}}
\setlist[enumerate]{
	label=\textnormal{({\roman*})},
	ref={\roman*}}
\def\th@plain{%
	\thm@notefont{}
	\itshape 
}
\def\th@definition{%
	\thm@notefont{}
	\normalfont 
}
\newtheorem{thm}{Theorem}[section]
\newtheorem{lemma}[thm]{Lemma}
\newtheorem*{claim*}{Claim}
\newtheorem{prop}[thm]{Proposition}
\theoremstyle{definition}
\newtheorem{rem}[thm]{Remark}
\numberwithin{figure}{section}
\numberwithin{equation}{section}
\def\nn{\mathbb N}
\def\qqq{\mathbb Q}
\def\al{\alpha}
\def\<{\langle}
\def\>{\rangle}
\def\rp{ {\text {\rm p}  } }
\def\SL{ {\text {\rm SL} } }
\def\0{{\mathbf 0}}
\def\.{\hskip.06cm}
\def\ts{\hskip.03cm}
\def\br{{\ze}}
\def\ze{{\zeta}}
\def\.{\hskip.06cm}
\def\ts{\hskip.03cm}
\def\aS{\textrm{S}}
\def\aSr{\textrm{\em S}}
\DeclareMathOperator{\ac}{{\text{$c$}}}
\DeclareMathOperator{\ad}{{\text{$d$}}}
\DeclareMathOperator{\at}{{\text{$t$}}}
\DeclareMathOperator{\bI}{\rho}
\title
[Effective resistance in planar graphs]
{Effective resistance in planar graphs \\ and continued fractions }
\date{\today}
 \author{Swee Hong Chan}
 \address[Swee Hong Chan]{Department of Mathematics, Rutgers University,  Piscataway, NJ 08854.}
 \email{\texttt{sweehong.chan@rutgers.edu}}
 \author{Alex Kontorovich}
 \address[Alex Kontorovich]{Department of Mathematics, Rutgers University,  Piscataway, NJ 08854.}
 \email{\texttt{alex.kontorovich@rutgers.edu}}
 \author[\ts Igor Pak]{Igor Pak}
 \address[Igor Pak]{Department of Mathematics, UCLA,  Los Angeles, CA 90095.}
 \email{\texttt{pak@math.ucla.edu}}
\begin{document}

\begin{abstract}
For a simple graph $G=(V,E)$ and edge $e\in E$, the
\emph{effective resistance} is defined as a ratio \ts
$\frac{\tau(G/e)}{\tau(G)}\ts$, where $\tau(G)$ denotes
the number of spanning trees in~$G$.  We resolve the \emph{inverse
problem} for the effective resistance for planar graphs.
Namely, we determine (up to a constant) the smallest size
of a simple planar graph with a given effective resistance.  The results
are motivated and closely related to our previous work \cite{CKP}
on Sedl\'a\v{c}ek's inverse problem for the number of spanning trees.
\end{abstract}
	
\maketitle

\section{Introduction}\label{s:intro}

\smallskip

\subsection{Spanning trees and effective resistance}
Let \ts $G=(V,E)$ \ts be a connected graph without loops.
For an edge \ts $e \in E$,
denote by \ts $G/e$ \ts the graph obtained by {contracting}
the edge~$e$, where all resulting loops are removed.

Let \ts $\tau(G)$ \ts denote the \emph{number of spanning trees} of~$G$.
The \defn{effective resistance} is defined as
\begin{equation}\label{eq:bI}
		\bI(G,e) \ := \  \frac{\tau(G/e)}{\tau(G)}\..
\end{equation}

This notion goes back to Kirchhoff (1847) in the context of
\emph{electrical networks}, as it measures the current
through the edge $e=(x,y)$, as a fraction of the current between
nodes
$x$ and~$y$.
Effective resistance is one of the main
graph
invariants, with applications across mathematics and the sciences.
Notably, it was proved by Nash-Williams~\cite{Nas}, that \ts $\bI(G,e)$ \ts
is the probability that a simple random walk which
starts
at $x$ and
exits
at~$y$, traverses~$e$ at some point.
We refer to \cite{DS,Lov-survey,LP} for
background,
modern proofs,
and further references.

\smallskip


Clearly, 
effective resistance can be computed in poly-time via the \emph{matrix tree theorem}.
In this paper, we study the \emph{inverse problem}, of finding the smallest size graph with a
given effective resistance.  We restrict ourselves to planar graphs.
%
%

 \smallskip

 \begin{thm}[{\rm Main theorem}{}]\label{t:main}
Let \ts $\at>\ac\ge 1$ \ts be coprime integers. Then there exists a
simple planar graph \ts $G=(V,E)$ \ts and an  edge \ts $e\in E$,  such that
\begin{equation}\label{eq:ratio}
 \bI(G,e) \, = \,  \frac{\ac}{\at}
\end{equation}
and
\begin{align}
  \label{eq:Kir-vertex} |V| \ \leq \ C \. \max \bigg \{\. \frac{\at}{\ac} \., \. \frac{\at}{\at-\ac}\., \. \log \at  \bigg\},
\end{align}
for some universal constant $C>0$.
 \end{thm}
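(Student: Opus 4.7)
The plan is to construct $G$ explicitly via a three-stage procedure. Throughout, I will use the parallel-resistor formula: for a theta-like graph consisting of an edge $e = xy$ together with internally disjoint paths $P_2, \ldots, P_m$ between $x$ and $y$, the effective resistance satisfies $\bI(G,e) = \bigl(1 + \sum_{i \ge 2} 1/\ell(P_i)\bigr)^{-1}$, where $\ell(P_i)$ denotes the length of $P_i$. This identity lets us translate algebraic manipulation of $c/t$ directly into graph operations.

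\emph{Stage 1 (duality reduction).} I first reduce to the case $c \le t/2$. If $c > t/2$, a construction for $(t-c)/t$ can be converted to one for $c/t$ via a planar-duality argument (or equivalently, an elementary transformation that swaps the roles of an edge with a path of length two), at a cost of $O(1)$ extra vertices. This is exactly the source of the $t/(t-c)$ term in the bound.

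\emph{Stage 2 (theta skeleton).} Assuming $c \le t/2$, set $\at_1 := \lfloor t/c \rfloor \ge 2$ and $r := t - \at_1 c \in [0, c)$. I build $G_0$ as a theta graph on $2\at_1$ vertices: the edge $e = xy$ together with $2(\at_1 - 1)$ internally disjoint length-$2$ paths from $x$ to $y$. By the formula above, $\bI(G_0,e) = 1/\at_1$, which equals $c/t$ precisely when $r = 0$. In this lucky case we finish with $|V(G_0)| = 2\at_1 \le 2t/c$.

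\emph{Stage 3 (refinement).} When $r > 0$, I attach a further internally disjoint branch $B$ between $x$ and $y$ whose effective resistance must equal $c/r$. The parallel combination then yields $\bI(G,e) = 1/(\at_1 + r/c) = c/t$, as required. Since $\gcd(c,r) = \gcd(c,t) = 1$ by the Euclidean identity, the subproblem is to realize a prescribed rational as the effective resistance of a simple planar graph between two designated vertices. I would accomplish this with $O(\log t)$ vertices by adapting the Sedl\'a\v{c}ek-style binary construction from the authors' prior work~\cite{CKP}: one builds $B$ by a bounded-depth sequence of series and parallel compositions that simultaneously controls $\tau(B)$ and the separating $2$-forest count, with each operation roughly doubling (rather than incrementing) the relevant invariants.

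\emph{Main obstacle.} The hardest step is Stage 3. A naive continued-fraction decomposition of $B$ would cost $\Omega(\sum_i a_i)$ vertices, where $a_i$ are the partial quotients of $c/r$; for badly balanced rationals such as $100/201$ these sum to $\Omega(c)$, vastly exceeding our $\log t$ budget. The fix is the batched, doubling operations of Stage 3, implementing an essentially binary expansion of the target rather than the unary expansion furnished by the classical continued-fraction algorithm. Combining the $O(t/c)$ vertices from the Stage~2 skeleton with the $O(\log t)$ vertices from the Stage~3 refinement, and absorbing the $O(1)$ loss from Stage~1, yields $|V| \le C \max\bigl\{t/c,\, t/(t-c),\, \log t\bigr\}$.
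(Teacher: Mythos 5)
Stages 1 and 2 of your plan are sound and broadly in the spirit of the paper (the paper also uses plane duality and a ``marked sum'' to glue parallel branches), but Stage 3 contains the entire difficulty of the theorem, and the justification you offer for it does not hold up. You assert that a branch $B$ between $x$ and $y$ with prescribed effective resistance $c/r$ can be built on $O(\log t)$ vertices ``by adapting the Sedl\'a\v{c}ek-style binary construction from [CKP]'' via ``a bounded-depth sequence of series and parallel compositions.'' But [CKP] only produces graphs with a prescribed spanning-tree count for a \emph{positive-density} set of targets, not for all of them; it does not give a general $O(\log)$-vertex realization of an arbitrary rational ratio, and neither does any ``batched doubling'' scheme. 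The obstruction is structural: a series or parallel composition changes the continued-fraction expansion of the resistance by one partial quotient at a time, so the set of resistances reachable with $n$ vertices and bounded partial quotients has exponentially small measure --- this is precisely the content of Zaremba's conjecture, which is open. There is no known elementary ``binary expansion'' that sidesteps it.

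The paper's actual route to Stage 3 is Bourgain's theorem (Theorem~\ref{thm:Bou}), a deep consequence of the Bourgain--Kontorovich machinery: every rational $d/c\in[0,1)$ is a \emph{signed} sum $q_1+\cdots+q_k$ of rationals whose continued-fraction digit sums total $O(\log(c+d))$. The paper then converts each $q_i$ into a small planar marked graph (Proposition~\ref{prop:graph-CF}), glues them with the marked sum (Lemma~\ref{lem:marked-sum} / Theorem~\ref{thm:graph-sum}), and uses the doubling/halving ``simplification'' trick (Lemma~\ref{lem:halving}) to restore simplicity; the negative summands are absorbed by shifting each $q_i$ into $(0,1)$ and compensating in $q_1$. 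Without Bourgain's input (or an equivalent), your Stage 3 is an assumption, not a construction, and the proof is incomplete at exactly the point where the theorem is nontrivial.
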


 \smallskip

In fact, the upper bound in the theorem is optimal in the following sense:

\smallskip

 \begin{prop}\label{p:main-lower}
Let \ts $G=(V,E)$ \ts be connected simple planar graph,
and let \ts $e\in E$ \ts be an edge that is not a bridge.  Suppose that
\begin{equation}\label{eq:ratio-prop}
 \bI(G,e) \, = \,  \frac{\ac}{\at}
\end{equation}
is a reduced fraction.  Then:
\begin{align}
  \label{eq:Kir-vertex-prop} |V| \ \geq \ C' \. \max \bigg \{\. \frac{\at}{\ac} \., \. \frac{\at}{\at-\ac}\., \. \log \at  \bigg\},
\end{align}
for some universal constant $C'>0$.
 \end{prop}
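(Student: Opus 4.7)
The plan is to prove each of the three lower bounds in \eqref{eq:Kir-vertex-prop} separately, using (i) the interpretation of $\bI(G,e)$ as an effective resistance, (ii) the short-circuit bound $R_{\text{eff}}(u,v) \geq 1/\deg(u)$, and (iii) planar duality. Let $x,y$ denote the endpoints of $e$, and let $f_1, f_2$ be the two faces of $G$ incident to $e$ in some fixed planar embedding; these are distinct because $e$ is not a bridge. By Kirchhoff's theorem, with each edge of $G$ carrying unit resistance, $\bI(G,e) = R_{\text{eff}}^{G}(x,y) = c/t$. Since the fraction is reduced, $t \mid \tau(G)$, and deletion--contraction yields $\tau(G-e) = (t-c)\tau(G)/t$.

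For the bound $|V| \geq C' \log t$, I would use the crude estimate $\tau(G) \leq \binom{|E|}{|V|-1} \leq \binom{3|V|-6}{|V|-1}$, where the first inequality counts spanning trees as $(|V|-1)$-subsets of edges and the second uses Euler's formula. A standard binomial bound gives $\tau(G) \leq K^{|V|}$ for a universal constant $K$, and combined with $t \leq \tau(G)$ this yields $|V| \geq (\log t)/\log K$.

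For the bound $|V| \geq C'(t/c)$, shorting all vertices of $G$ other than $x$ into a single node produces a multigraph with $\deg_G(x)$ parallel edges between two nodes, of effective resistance $1/\deg_G(x)$; by Rayleigh monotonicity, $R_{\text{eff}}^{G}(x,y) \geq 1/\deg_G(x) \geq 1/(|V|-1)$, whence $t/c \leq |V|-1$. For the bound $|V| \geq C'(t/(t-c))$, planar duality gives $\tau(G^*) = \tau(G)$ and $\tau(G^*/e^*) = \tau(G-e)$, so that $\bI(G^*, e^*) = (t-c)/t = R_{\text{eff}}^{G^*}(f_1, f_2)$. Applying the short-circuit bound in $G^*$ at $f_1$, and observing that the number of non-loop edges of $G^*$ at $f_1$ is at most the length of face $f_1$ in $G$, which is at most $2|E(G)| \leq 6|V|-12$, we obtain $t/(t-c) \leq 6|V|-12$.

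Taking $C'$ to be the minimum of the three universal constants obtained above yields \eqref{eq:Kir-vertex-prop}. The main care needed is in the last step, since $G^*$ is in general a multigraph with parallel edges (when two faces share several edges) and loops (corresponding to bridges of $G$); loops do not affect the short-circuit lower bound on $R_{\text{eff}}$, so the Rayleigh argument transfers to $G^*$ verbatim.
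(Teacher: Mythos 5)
Your proof is correct and follows essentially the same three-pronged strategy as the paper: bound $|V|$ from below via (a) an exponential upper bound on $\tau(G)$ combined with $t \le \tau(G)$, (b) a degree bound on effective resistance at the endpoints of $e$, and (c) the same degree bound applied in the plane dual. The ingredients differ only in provenance: you use Rayleigh monotonicity and shorting to get $R_{\mathrm{eff}}(x,y)\ge 1/\deg(x)$, whereas the paper invokes the commute-time identity of Chandra et al.\ to get $\bI(G,e)\ge\tfrac12(1/\deg(x)+1/\deg(y))$; and you bound $\tau(G)\le\binom{|E|}{|V|-1}$ whereas the paper uses $\tau(G)<2^{|E|}$. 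The one place where your argument is genuinely more careful is the dual step: since $G^\ast$ is in general a multigraph (possibly with loops), the simple-graph bound $\deg_{G^\ast}(f)<|F|$ used implicitly in the paper's chain $|F|>1/\bI(G^\ast,e)$ is not automatic; your substitute bound $\deg_{G^\ast}(f)\le 2|E|\le 6|V|-12$, together with the observation that loops in $G^\ast$ are harmless for the short-circuit estimate, closes that gap cleanly and yields the same order of magnitude.
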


Here the non-bridge condition is necessary to ensure that \ts $\bI(G,e)<1$.
Proposition~\ref{p:main-lower} follows easily from the results in the literature;
we present a short proof in~$\S$\ref{ss:lower}.  See~$\S$\ref{ss:finrem-const}
for an explicit value of the constant~$C'$.

\smallskip

The idea of inverse problems for combinatorial functions was developed independently
in several different areas.  Recently, it reemerged in the computational complexity
context and systematically studied by the first and third author \cite{CP-coinc}.
This paper is a continuation of our previous study \cite{CKP} of
\emph{Sedl\'a\v{c}ek's problem}, which is the inverse problem for the
number of spanning trees in simple graphs (see below).  There, we introduced a new
approach based on
continued
 fractions, and used the \emph{Bourgain--Kontorovich
technology} \cite{BK14} to obtain the result.  Here we follow a closely related
approach.


Theorem~\ref{t:main} extends the earlier result \cite[Lemma~1.14]{CP-SY},
where a  weaker bound \. $|V| = O(\log \at \. (\log \log \at)^2)$ \ts was
obtained for \. $\tfrac{\ac}{\at}\in \big [\tfrac{1}{3},\tfrac{2}{3}\big]$.
Additionally, the graphs in \cite{CP-SY} were allowed to have multiple edges.

\smallskip

\subsection{Sedl\'a\v{c}ek's problem}\label{ss:intro-Sedlacek}
Given a positive integer \ts $t\ge 3$, let \ts $\alpha(\at)$ \ts be the smallest
number vertices of a simple graph with \emph{exactly} \ts $\at$ \ts spanning trees.
The study of the function \ts $\alpha(\at)$ \ts was initiated by Sedl\'a\v{c}ek
in a series of papers
\cite{Sed66,Sed69,Sed70}, and remains unresolved.  The best known upper bound is
\. $\alpha(\at)=O((\log \at)^{3/2} / (\log \log \at))$ \. due to  Stong~\cite{Stong},
while best known lower bound is \.
$\alpha(\at) = \Omega(\log \at/\log \log \at)$,  which
follows from \emph{Cayley's formula} \. $\tau(K_n)=n^{n-2}$.
Azarija and \u{S}krekovski \cite{AS12} conjectured that \ts $\alpha(t)=o(\log \at)$.

In \cite{CKP}, the authors consider another, closely related problem by Sedl\'a\v{c}ek,
on the function \ts $\al_{\rp}(\at)$ \ts defined as the smallest
number vertices of a simple \emph{planar} graph with exactly \ts $\at$ \ts
spanning trees.  Clearly, we have \ts $\al(\at) \le \al_{\rp}(\at)$ \ts for all~$t$.  
It is known
and easy to see that \ts $\al_{\rp}(\at) = \Omega(\log \at)$, see~$\S$\ref{ss:lower}.
The main result in \cite{CKP} is that \ts $\al_{\rp}(\at) = O(\log \at)$ \ts for a
set of integers \ts $\at$ \ts of positive density.  One can think of Theorem~\ref{t:main}
as a tradeoff: the density condition is removed but the number of spanning trees
is replaced with a ratio of two such numbers.

In a different direction, it follows from Theorem~\ref{t:main} that for all integers \ts
$\at\ge 2$, there exists a connected simple planar graph \ts $G=(V,E)$ \ts such that
\begin{equation}\label{eq:k-Sedlacek}
	   \tau(G) \, \equiv \, 0 \mod \at \quad \text{ and } \quad |V| \, = \, O(\log \at).
\end{equation}
This is also a special case of the following modular version of Theorem~\ref{t:main}.

\smallskip

\begin{thm}[{\rm Alon--Buci\'c--Gishboliner \cite[Thm~3.1]{ABG}}{}]\label{thm:ABG}
	Let \ts $\at\ge 2$ \ts be a positive integer, and let \ts $a,b\in \nn$ \ts
be such that \. $(a,b)\neq (0,0) \ \, \textnormal{mod }   \at$.
	Then there exists a simple planar graph \ts $G=(V,E)$ \ts and an edge \ts $e\in E$, such that
	\[ \tau(G/e) \, \equiv \, a \mod \at, \quad \tau(G) \, \equiv \, b \mod \at, \quad \text{ and } \quad |V| \, = \,  O(\log \at).   \]
\end{thm}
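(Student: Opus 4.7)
The plan is to reduce Theorem~\ref{thm:ABG} to Theorem~\ref{t:main} in two stages: first, find small coprime integers $(\alpha,\beta)$ with $\alpha\equiv a$ and $\beta\equiv b \mod \at$, and apply Theorem~\ref{t:main} to realize $\alpha/\beta$ as an effective resistance in a small planar graph; second, massage the resulting graph by wedge-sums (one-vertex identifications, which multiply spanning tree counts) to pin down the individual residues $\tau(G)\equiv b$ and $\tau(G/e)\equiv a\mod\at$.

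For the first stage, suppose first that $\gcd(a,b,\at)=1$. I would choose $(\alpha,\beta)$ coprime with $\alpha\equiv a$, $\beta\equiv b\mod\at$, $0<\alpha<\beta=O(\at^{2})$, and $\alpha/\beta\in[1/3,\,2/3]$, so that Theorem~\ref{t:main} delivers a simple planar $G$ with $\bI(G,e)=\alpha/\beta$ and $|V(G)|=O(\log\beta)=O(\log\at)$. Existence of such $(\alpha,\beta)$ is an elementary sieving argument: each residue class mod $\at$ contains $\Theta(\at)$ representatives in an interval of length $\Theta(\at^{2})$, and a positive density of pairs are coprime (by the $6/\pi^{2}$ heuristic). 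When $\gcd(a,b,\at)=d>1$, no coprime $(\alpha,\beta)$ with the desired residues can exist, so one first works with $(a/d,b/d)$ mod $\at/d$ (for which the gcd with $\at/d$ is $1$), builds a graph $G'$ for that target, and then wedges with an auxiliary planar graph $H_{d}$ satisfying $\tau(H_{d})\equiv d\mod\at$; since the wedge-sum identities $\tau(G'\vee H_{d})=\tau(G')\tau(H_{d})$ and $\tau((G'\vee H_{d})/e)=\tau(G'/e)\tau(H_{d})$ (for $e\in G'$ with wedge-vertex off $e$) multiply both residues by $\tau(H_{d})$, the pair transports correctly to $(a,b)\mod\at$. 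The graph $H_{d}$ is itself produced by the same procedure applied to a single-coordinate target.

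For the second stage: Theorem~\ref{t:main} yields $\tau(G)=\beta k$ and $\tau(G/e)=\alpha k$ for some integer $k\geq 1$ depending on the construction, so modulo $\at$ we obtain $\tau(G)\equiv bk$ and $\tau(G/e)\equiv ak$. To hit $(a,b)$ we need $k\equiv 1\mod\at$. This is the main obstacle: Theorem~\ref{t:main} gives no direct control over $k$. Two ways out suggest themselves. The first is to strengthen Theorem~\ref{t:main} in the balanced regime $\alpha/\beta\in[1/3,\,2/3]$ to produce $\tau(G)=\beta$ \emph{exactly} (forcing $k=1$): this is a plausible Sedl\'a\v{c}ek-type refinement because the continued-fraction gadgets behind Theorem~\ref{t:main} frequently admit exact realization, and it would trivialize the adjustment step. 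The second is to absorb $k$ by further wedge-sums with auxiliary planar graphs whose own $k$-factors combine to cancel the original $k$ modulo $\at$; keeping the total vertex count $O(\log\at)$ then reduces to a combinatorial--number-theoretic lemma asserting that $O(1)$-many such random-looking multipliers jointly cover $(\zz/\at\zz)^{\times}$. This last step is where the finer analysis of Alon--Buci\'c--Gishboliner \cite{ABG} beyond what is immediately visible from Theorem~\ref{t:main} is required, and is the place I expect the real work to live.
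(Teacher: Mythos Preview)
This theorem is not proved in the present paper; it is quoted from \cite{ABG}, and the paper explicitly notes that the proof there ``involves the celebrated expander construction of $\SL(2,p)$ based on Selberg's theorem.'' So there is no in-paper proof to match, and the approach you sketch is entirely different from the one actually used in \cite{ABG}.

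On the merits, your reduction to Theorem~\ref{t:main} has a genuine gap, which you yourself flag: Theorem~\ref{t:main} controls only the ratio $\tau(G/e)/\tau(G)$, not the individual residues, so you land at $(ak,bk)\bmod\at$ with an uncontrolled multiplier~$k$. Neither of your proposed fixes closes this. Your first fix asks for $\tau(G)=\beta$ exactly; but the construction behind Theorem~\ref{t:main} passes through the simplification operation of~$\S$\ref{ss:halving}, which multiplies all spanning-tree counts by large powers of~$2$, so exact realization is lost and recovering it would amount to a Sedl\'a\v{c}ek-type result stronger than anything proved here. Your second fix, absorbing $k$ by wedge-sums with auxiliary graphs, is circular: each auxiliary graph produced by the same machinery carries its own uncontrolled multiplier, so you are chasing the problem rather than solving it, and the hoped-for ``$O(1)$ random-looking multipliers cover $(\zz/\at\zz)^\times$'' lemma is neither stated precisely nor plausible without further input. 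In short, your outline correctly identifies why Theorem~\ref{t:main} alone does not yield Theorem~\ref{thm:ABG}, but it does not supply the missing ingredient; the expander argument in \cite{ABG} is doing real work that your scheme does not replace.
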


\smallskip

The equation \eqref{eq:k-Sedlacek} follows from Theorem~\ref{thm:ABG} by setting $a=1$ and $b=0$.
The proof in \cite{ABG} involves the celebrated expander construction of \ts $\SL(2,p)$
\ts based on Selberg's theorem \cite{Sel65}.  Curiously, the same Selberg's theorem is the
starting point of a long chain of results leading to \cite{Bou12,BK14}, which provided main
tools for both \cite{CKP} and for this paper.

 \medskip

 \subsection{Connections to continued fractions}\label{ss:intro-CF}
%
%
 Given the integers \. $a_0\geq 0$ \. and \. $a_1, \ldots, a_\ell \geq 1 $, 
 the corresponding \defnb{continued fraction} \ts is defined as
 \[ [a_0\ts ; \ts a_1,\ldots, a_\ell] \ := \ a_0  \. +  \.  \cfrac{1}{a_1  \. +  \. \cfrac{1}{\ddots  \ +  \.  \frac{1}{a_\ell}}}
 \]
 Integers \ts $a_i$ \ts are called \defn{partial quotients}, see e.g.\ \cite[$\S$10.1]{HardyWright}.
 We use the notation \ts $[a_1,\ldots, a_\ell]$ \ts when \ts $a_0=0$.
 Every positive rational number \. $q$ \. admits  two  continued fraction representations  since \. $[a_0;a_1,\ldots, a_\ell,1]=[a_0;a_1,\ldots, a_\ell+1]$\..
 For $q \in \qqq_{>0}$, we define
  $$
  \aS(q) \, :=  \, a_0 \. + \. a_1 \. + \. \ldots\. + \. a_\ell
  $$
 and note that \ts $\aS(q)$ \ts is the same for both representations.
 We also define \. $\aS(0):=0$.



We need the following remarkable result which was proved
as a consequence of the {Bourgain--Kontorovich technology}~\cite{BK14}.

 \smallskip

 \begin{thm}[{\rm Bourgain \cite[Prop.~1]{Bou12}}{}]\label{thm:Bou}
 	 Every rational number \. $\tfrac{\ad}{\ac} \in [0,1)$ \. can
be written as the following sum:
\begin{align}\label{eq:Bou}
  \frac{\ad}{\ac} \, = \, q_1 \. + \. \ldots \. + \. q_k\.,
\end{align}
where \. $q_1,\ldots, q_k\in \qqq \cap (-1,1)$ \. satisfy
 \[ \aSr\big(|q_1|\big)  \. + \. \ldots \. + \. \aSr\big(|q_k|\big) \ \leq \ C \ts \log (\ac+\ad),  \]
for some universal constant \ts $C>0$.
 \end{thm}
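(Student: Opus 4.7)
The plan is to apply the Bourgain–Kontorovich \cite{BK14} density of rationals with bounded partial quotients in an iterative reduction, peeling off contributions from $d/c$ until nothing remains.

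First I would fix a universal constant $A$ supplied by \cite{BK14} and consider the set $\mathcal{Z}_A$ of reduced rationals $p/q \in (0,1)$ whose continued fraction has every partial quotient at most $A$. Two facts about $\mathcal{Z}_A$ are essential: (a) for any $p/q \in \mathcal{Z}_A$, the Fibonacci-type growth of CF denominators forces the CF length to be $O(\log q)$, whence $\aS(p/q) \leq A \cdot O(\log q) = O(\log q)$; and (b) $\mathcal{Z}_A$ has positive density among reduced fractions of bounded denominator, and retains positive density when restricted to residue classes modulo integers of moderate size.

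Next I would run an iterative reduction. Setting $d_0/c_0 := d/c$, at step $j$ the current remainder is $d_j/c_j$ in lowest terms; I would select $p_j/q_j \in \mathcal{Z}_A$ such that
\[
\frac{d_j}{c_j} \, - \, \frac{p_j}{q_j} \, = \, \frac{d_j q_j - p_j c_j}{c_j q_j}
\]
reduces, after cancellation of $\gcd(d_j q_j - p_j c_j,\, c_j q_j)$, to a fraction $d_{j+1}/c_{j+1}$ with $c_{j+1}$ at most, say, $c_j^{1/2}$. The required cancellation is engineered by placing $p_j/q_j$ in a prescribed residue class modulo a large divisor of $c_j$, which is possible by fact (b) above. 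After $k = O(\log \log c)$ iterations $c_k = O(1)$, and I would terminate by absorbing the tiny leftover into one further Zaremba rational (of digit-sum $O(1)$). The total budget telescopes:
\[
\sum_j \aS(p_j/q_j) \, = \, \sum_j O(\log c_j) \, \leq \, O(\log c) \cdot \sum_{j \geq 0} 2^{-j} \, = \, O(\log(c+d)).
\]
Any sign issues in the subtractions are absorbed by the freedom to pick $q_j$ of either sign.

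The hard part will be fact (b): establishing that $\mathcal{Z}_A$ has positive density within residue classes. This is precisely the content of the BK technology, which ultimately rests on Selberg-type spectral gaps for congruence subgroups of $\SL(2,\mathbb{Z})$ via the affine sieve framework. Verifying that the needed cancellation can actually be engineered at every iteration — including for potentially pathological $d/c$, where one might have to settle for a slower-than-square-root reduction rate or exploit the signed-coefficient flexibility to route around bad residue classes — is the serious technical content. Once this distributional input is in hand, the iterative scheme and telescoping digit-sum accounting are comparatively routine.
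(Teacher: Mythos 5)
This theorem is not proved in the paper at all; it is quoted verbatim from Bourgain's note \cite{Bou12}, so there is no ``paper's own proof'' to compare against. What you have written is a reconstruction of how you imagine Bourgain's argument might go, and the honest assessment is that the reconstruction has a gap precisely where you flag one, but the gap is larger than you acknowledge.

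Your ``fact (b)'' --- that Zaremba fractions $p/q$ with all partial quotients $\leq A$ have positive density when constrained to prescribed residue classes ``modulo integers of moderate size'' --- is not a consequence of the Bourgain--Kontorovich theorem. What \cite{BK14} gives is that the set of \emph{denominators} $q$ admitting some Zaremba numerator has density $1$ in $[1,N]$ as $N\to\infty$. Your iteration instead requires, at step $j$, a Zaremba pair $(p_j,q_j)$ with $q_j$ of controlled size and with $p_j q_j^{-1} \equiv d_j c_j^{-1} \pmod{m_j}$ for some large divisor $m_j \mid c_j$ of size roughly $c_j^{1/2}$; this is a local equidistribution statement for the Zaremba set in arithmetic progressions whose modulus is \emph{comparable to the parameter itself}, which is a qualitatively stronger input than the archimedean density in \cite{BK14} and is not established there. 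Moreover the scheme breaks structurally when $c_j$ has no divisor near $c_j^{1/2}$ (e.g.\ $c_j$ prime): then the only way to cancel a factor of size $\gg c_j^{1/2}$ from $c_j q_j$ is to cancel all of $c_j$, which forces $c_j \mid (d_j q_j - p_j c_j)$, i.e.\ $c_j \mid d_j q_j$, impossible since $\gcd(d_j,c_j)=1$ and $q_j < c_j$. The ``route around bad residue classes with signed coefficients'' remark does not repair this, since sign flexibility does not create divisors. In short, the telescoping bookkeeping at the end is fine \emph{conditional} on the reduction step, but the reduction step is where the entire difficulty lives, and the distributional input you invoke is not available off the shelf. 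Bourgain's actual proof in \cite{Bou12} does use the density technology of \cite{BK14} as its engine, but via a more delicate argument than peeling with engineered $\gcd$ cancellation; if you want to present a proof rather than cite the result, you would need to reproduce that argument or supply the local equidistribution statement you are implicitly assuming.
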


 \smallskip

We apply Theorem~\ref{thm:Bou} to  our problem via the following theorem,
which converts the problem of  analyzing  ratios of spanning trees
into a problem on continued fractions.  For an edge \ts $e \in E$,
we denote by \ts $G-e$ \ts the graph obtained by \defnb{deleting} the edge $e$ on $G$.
 \smallskip

\begin{thm}\label{thm:graph-sum}
 Let \. $q_1,\ldots, q_k\in \qqq_{>0}$ \. be  positive rational numbers.
 Then there exists a simple planar graph \ts $G=(V,E)$ \ts and an edge \ts
 $e\in E$,  such that
 	\[
        \frac{\tau(G-e)}{\tau(G/e)} \ = \  q_1 \. + \. \ldots \. + \. q_k
    \]
 	and
 	\[ |E| \, = \,  4\big(\aSr(q_1) \. + \. \ldots \. + \. \aSr(q_k)\big) \. + \. 1. \]
 \end{thm}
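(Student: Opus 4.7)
The plan is to reduce the statement to constructing, for each positive rational $q$, a simple planar $2$-terminal network of prescribed effective conductance, and then to combine these networks in parallel with one extra edge. The key identity is the following. For a $2$-terminal network $N$ on terminals $x, y$, set $T(N) := \tau(N)$ and $T'(N) := \tau(N \cdot xy)$, where $N \cdot xy$ denotes $N$ with $x$ and $y$ identified (and resulting loops removed). A standard spanning-tree computation shows that the effective conductance of $N$ between $x$ and $y$ equals $T(N)/T'(N)$, and is additive under parallel composition. If $G$ is $N$ together with one new edge $e = (x, y)$, then $\tau(G-e) = T(N)$ and $\tau(G/e) = T'(N)$, so $\tau(G-e)/\tau(G/e)$ is precisely the effective conductance of $N$.

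Each per-$q_i$ gadget is built by a continued-fraction ladder whose atomic block is the $4$-cycle $C_4$ on four vertices, viewed as a $2$-terminal network with terminals at opposite corners. This block is simple and planar, has $4$ edges, and has resistance (equivalently, conductance) equal to $1$. Series-composing $m$ copies yields a simple planar network of $4m$ edges and resistance $m$; parallel-composing $m$ copies yields $4m$ edges and conductance $m$. Given the continued fraction $q = [c_0; c_1, \ldots, c_\ell]$, we define recursively networks $N_q$ (of resistance $q$) and $N^*_q$ (of conductance $q$) by alternating series and parallel steps that consume one partial quotient per level:
\begin{align*}
N_{[c_0; c_1, \ldots, c_\ell]} \ &:= \ (c_0 \text{ copies of } C_4 \text{ in series}) \text{ in series with } N^*_{[c_1; c_2, \ldots, c_\ell]}, \\
N^*_{[c_0; c_1, \ldots, c_\ell]} \ &:= \ (c_0 \text{ copies of } C_4 \text{ in parallel}) \text{ in parallel with } N_{[c_1; c_2, \ldots, c_\ell]},
\end{align*}
with the analogous base cases at $\ell = 0$. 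An induction on $\ell$ shows that $N^*_q$ has conductance $q$ and exactly $4\aSr(q)$ edges.

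Taking $N_i := N^*_{q_i}$, we glue the $N_i$'s in parallel by identifying all $x_i$'s to a single vertex $x$ and all $y_i$'s to a single vertex $y$, and adjoin one new edge $e = (x, y)$ to form $G$. Planarity is preserved by embedding each $N_i$ in a separate face; simplicity is preserved because the $4$-cycle atomic block, rather than a single edge, guarantees that no $N_i$ contains a direct $x_i$-$y_i$ edge, so $e$ is the unique $xy$-edge in $G$. The key identity together with conductance additivity under parallel composition then yields $\tau(G-e)/\tau(G/e) = q_1 + \cdots + q_k$, while the edge count is $|E(G)| = 1 + 4 \sum_i \aSr(q_i)$, as claimed. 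The main technical obstacle is the simplicity bookkeeping across levels of the recursion, which is precisely what the thickened $C_4$ block (in place of a single edge) is designed to secure.
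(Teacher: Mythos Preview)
Your proposal is correct and in fact produces the same graph as the paper, but the bookkeeping is organized differently. The paper first builds (possibly non-simple) planar gadgets $(G_i,e_i)$ for each $q_i$ via single-edge subdivisions and duplications (Proposition~\ref{prop:graph-CF}, with $|E_i|=\aSr(q_i)+1$), glues them by the marked sum (Lemma~\ref{lem:marked-sum}), and only then restores simplicity with a global \emph{simplification} step (Lemma~\ref{lem:halving}): duplicate every non-marked edge, then subdivide every non-marked edge, which preserves $\br$ and multiplies the non-marked edge count by~$4$. Your route bakes simplicity in from the start by using $C_4$ (with opposite-corner terminals) as the atomic block of the continued-fraction ladder. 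These are two sides of the same coin: the paper's simplification applied to a single edge yields exactly such a $C_4$, and the operation commutes with series and parallel composition, so the two constructions output isomorphic graphs. The paper's packaging has the advantage that Lemma~\ref{lem:halving} is a reusable tool, invoked again in the proof of Theorem~\ref{t:main}; your version is more self-contained for this particular statement and avoids introducing that lemma here.
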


\smallskip

Note that Theorem~\ref{thm:Bou} and Theorem~\ref{thm:graph-sum} immediately imply Theorem~\ref{t:main} when  all rational numbers \.  $q_1,\ldots, q_k$ \. in the representation of  \. $\tfrac{\at-\ac}{\ac}$ \. in \eqref{eq:Bou} are  nonnegative.
The full version of Theorem~\ref{t:main} can then be obtained through a more involved argument that builds on both Theorem~\ref{thm:Bou} and Theorem~\ref{thm:graph-sum}. 

\begin{rem}
We note that Theorem~\ref{thm:Bou} cannot be strengthened to require that all \ts
$q_i$ \ts 
are nonnegative.  Indeed,
if \ts $q\in\qqq_{>0}$ \ts is less than \ts $1/c$, then \ts $S(q)\ge c$.  
Therefore, any expression of the unit fraction \ts $1/c$ \ts as \ts 
$1/c = q_1+\ldots+q_k$ \ts with positive $q_i$ must have \ts $S(q_1)+\ldots +S(q_k)\ge kc$.
\end{rem}

 \medskip

\section{Preliminaries}\label{s:prelim}

In this section, we collect several graph theoretic lemmas and constructions
that will be used in the proofs in Section~\ref{s:proof-main}.  Several of
these constructions are standard and have previously appeared in \cite{CP-SY, CKP},
but
our
\emph{simplification} \ts is new or at least used in a nonstandard way.

\subsection{Basic definitions}\label{ss:prelim-basic}
Throughout this paper,
 all graphs are assumed to be planar and have no loops.
Graphs \emph{are} allowed to have multiple edges unless indicated otherwise.
A graph
with no loops
is \defnb{simple} if it
also
does not have multiple edges.
Let \ts $G=(V,E)$ \ts be a  graph and let \ts $e \in E$ \ts be an edge in~$G$.
A pair \ts $(G,e)$ \ts is called a \defn{marked graph}.
%
Our operations are defined on marked graphs.

\smallskip

\subsection{Subdivisions and duplications}\label{ss:subdivision-duplication}
%
%
%
The \defnb{$k$-subdivision} of \ts $(G,e)$ \ts
 is  a marked graph \ts $(G',e')$ \ts
obtained by replacing edge~$e$ with a path of length $(k+1)$, and marking one of the
new edges as~$e'$. 
Note that if $G$ is connected, simple and planar,
then
so is~$G'$.
When $k=1$,  we omit the parameter and write \defnb{subdivision}.

The \defnb{$k$-duplication} of \ts $(G,e)$ \ts
is  a marked graph \ts $(G'',e'')$ \ts obtained
by replacing edge~$e$ with $(k+1)$ parallel edges, and marking one of the
new edges as~$e''$. 
Note that if $G$ is connected and planar,
then
so is~$G''$.
When $k=1$,  we omit the parameter and write \defnb{duplication}.
Note also that these two operations are planar dual to each other
(see below).



\smallskip

\begin{prop}[{\cite[Thm~5.1]{CP-SY}}]\label{prop:graph-CF}
		Let \ts $\ac,\ad \geq 1$ \ts  be positive integers with  \ts $\gcd(\ac,\ad)=1$.
	Suppose that
	\begin{equation}\label{eq:CFEagain}
		\frac{\ad}{\ac} \ = \   [a_0\ts; \ts a_1,a_2,\ldots, a_\ell],
	\end{equation}
	for some \. $a_0\geq 0,a_1,\ldots, a_{\ell}\ge 1$.
	Then there exists a  planar graph \ts $G=(V,E)$ \ts and an edge \ts $e\in E$ \ts satisfying
	\[
	\tau(G-e) \. = \. \ad,  \qquad \tau(G/e) \. = \. \ac,
	\]
	and such that
	\begin{align*}
		 |E| \. &= \. a_0 \. + \. a_1 \. + \. \ldots \. + \. a_\ell \. + \. 1.
	\end{align*}
\end{prop}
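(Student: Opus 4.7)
The plan is to induct on the length $\ell$, using the duplication operation from Section~\ref{ss:subdivision-duplication} together with planar duality $(G,e)\mapsto(G^*,e^*)$. The key identity for the ratio $r:=\tau(G-e)/\tau(G/e)$ is that duplication realizes $r\mapsto r+1$, while planar duality realizes $r\mapsto 1/r$; combined, these exactly mirror the continued-fraction recursion $[a_0;a_1,\ldots,a_\ell]=a_0+1/[a_1;\ldots,a_\ell]$. For the base case $\ell=0$, the equation $\ad/\ac=a_0$ forces $\ac=1$ and $\ad=a_0\ge 1$, and the two-vertex graph with $a_0+1$ parallel edges (any one of them marked as $e$) has the required spanning-tree counts and edge count.

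For the inductive step $\ell\ge 1$, set $\ad'/\ac':=[a_1;a_2,\ldots,a_\ell]$, which is automatically in lowest terms since convergents of continued fractions are always reduced. By the inductive hypothesis there is a planar marked graph $(G',e')$ with $\tau(G'-e')=\ad'$, $\tau(G'/e')=\ac'$, and $|E(G')|=a_1+\cdots+a_\ell+1$. Passing to the planar dual $(G^*,e^*)$ preserves the edge count and exchanges the two spanning-tree counts, yielding the pair $(\ac',\ad')$. Then I apply $a_0$ successive duplications to the marked edge. A one-line deletion-contraction check confirms that each duplication $H\mapsto H''$ sends $\bigl(\tau(H-f),\tau(H/f)\bigr)\mapsto\bigl(\tau(H-f)+\tau(H/f),\,\tau(H/f)\bigr)$ and adds one edge, so after $a_0$ iterations the pair reads $(a_0\ad'+\ac',\,\ad')$ with total edge count $a_0+a_1+\cdots+a_\ell+1$. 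Matching against $[a_0;a_1,\ldots,a_\ell]=(a_0\ad'+\ac')/\ad'$ in lowest terms identifies $\ad=a_0\ad'+\ac'$ and $\ac=\ad'$, closing the induction.

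The one input that is not just bookkeeping is the planar-duality identity $\tau(G^*-e^*)=\tau(G/e)$ and $\tau(G^*/e^*)=\tau(G-e)$, which I would invoke as the classical fact that $T\mapsto (E\setminus T)^*$ gives a bijection between spanning trees of $G$ and of $G^*$, exchanging the roles of a dual edge pair. Everything else is Fibonacci-style arithmetic on the deletion-contraction formula fitting the continued-fraction recursion term-by-term, so I do not anticipate a serious obstacle; an alternative presentation would avoid duality altogether by alternating duplication and subdivision (which are planar dual operations), essentially the route taken in \cite{CP-SY}.
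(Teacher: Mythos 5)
Your proof is correct, and it matches the mechanism the paper attributes to \cite{CP-SY} (``repeated subdivision and duplication of marked graphs starting with a single edge''), but the packaging differs: the cited proof builds the graph bottom-up by alternating subdivisions and duplications, whereas you run a top-down induction on $\ell$ and implement the reciprocal step $r\mapsto 1/r$ by passing to the plane dual rather than by subdividing. Since subdivision is exactly the plane dual of duplication (as you note), the two presentations are interchangeable; your version is arguably cleaner because only one primitive operation (duplication) is iterated, at the cost of invoking duality once per level of the recursion. Two small points are left implicit and are worth stating for completeness. First, the inductive hypothesis is applicable because $[a_1;a_2,\ldots,a_\ell]$ has leading term $a_1\ge 1\ge 0$ and is automatically in lowest terms, and the base case $\ell=0$ forces $c=1$ and $a_0=d\ge 1$, so the two-vertex graph with $a_0+1\ge 2$ parallel edges is well-defined and bridgeless. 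Second, to apply the duality identity one needs the marked graph to be proper; this holds at every stage because $\tau(G'-e')=d'\ge 1$ and $\tau(G'/e')=c'\ge 1$ by the inductive hypothesis, and the intermediate graphs stay loopless and bridgeless throughout (the base case has at least two parallel edges, duplication never creates a bridge, and the plane dual of a loopless bridgeless multigraph is again loopless and bridgeless). With those housekeeping remarks added, the argument is complete.
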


\smallskip

The proof of the proposition is given by a repeated subdivision and duplication
of marked graphs starting with a single edge.  Note that the resulting graph is
not necessarily simple as this required additional constraints on~$a_i\ts$,
see \cite[Lemma~2.3]{CKP}.

\smallskip

\subsection{Marked sum}\label{ss:marked-sum}
A marked graph  \ts $(G,e)$ \ts is called \defnb{proper} \ts if \ts $\tau(G-e)>0$ \ts
and \ts $\tau(G/e)>0$.  In other words, \ts $(G,e)$ is proper if $G$ is connected
and $e$ is not a bridge.
The  \ts \defnb{spanning tree ratio} \ts  of a proper  marked graph is defined as
\[ \br(G,e) \, := \, \frac{\tau(G-e)}{\tau(G/e)} \ > \ 0.  \]
It follows from \eqref{eq:bI} and the \emph{deletion-contraction formula} \. 
$\tau(G-e)+\tau(G/e)=\tau(G)$, that
\begin{equation}\label{eq:ratio-to-current}
	\bI(G,e) \, = \, \frac{1}{1+\br(G,e)}\..
\end{equation}

Let  \ts $(G,e)$ \ts  and \ts $(G',e')$ \ts be marked graphs,
where \ts $G=(V,E)$, \ts $G'=(V',E')$. By definition, we have \ts $e\in E$ \ts and \ts $e'\in E'$.
Define the \defnb{marked sum} \. $(G,e) \ts \oplus \ts (G',e')$ \.
as the marked graph \ts $(G^\circ,e^\circ)$ \ts obtained by  taking the disjoint
union of $G$ and $G'$ and  identifying $e$ with $e'$ as a single edge~$e^\circ$.
Note that if \ts $G$  \ts and \ts $G'$ are proper, planar and simple,
then so is \ts $G^\circ=(V^\circ,E^\circ)$.   Observe that \.
$|V^\circ|  =   |V| + |V'| - 2$ \. and \. $|E^\circ| = |E| + |E'| - 1$.

\smallskip

\begin{lemma}[{\cite[Lem.~5.4]{CP-SY}}]\label{lem:marked-sum}
	For proper marked graphs \ts $(G,e)$ \ts  and \ts $(G',e')$, and
the marked sum \ts $(G^\circ,e^\circ) = (G,e) \oplus (G',e')$ \ts defined above,
we have:
\begin{equation}\label{eq:ratio-marked-sum}
		\br(G^\circ,e^\circ) \, = \,  \br(G,e) \. + \. \br(G',e').
	\end{equation}
\end{lemma}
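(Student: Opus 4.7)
The plan is to expand both $\tau(G^\circ - e^\circ)$ and $\tau(G^\circ / e^\circ)$ in terms of spanning-tree and spanning-$2$-forest counts of the two summands, and then divide.

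For the denominator, contracting $e^\circ$ in $G^\circ$ identifies the two endpoints of $e$ (and of $e'$) into a single cut vertex, so $G^\circ/e^\circ$ is the one-vertex wedge of $G/e$ and $G'/e'$. Spanning trees of a one-vertex wedge are exactly independent pairs of spanning trees on the two sides, giving
\begin{equation*}
\tau(G^\circ/e^\circ) \. = \. \tau(G/e)\,\tau(G'/e').
\end{equation*}

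For the numerator, write $e = (x,y)$. Then $G^\circ - e^\circ$ is the amalgam of $H := G-e$ and $H' := G'-e'$ obtained by identifying the two vertices $x$ and $y$. Any spanning tree $T$ of this amalgam restricts on each side to a spanning subgraph that is a forest. A short case analysis shows that the two degenerate cases must be excluded: if both restrictions are spanning trees, a cycle is created through $x$ and $y$; if both are $2$-forests separating $x$ from $y$, the amalgam splits into an $x$-component and a $y$-component. Hence exactly one restriction is a spanning tree and the other is a spanning $2$-forest with $x,y$ in different components; writing $\tau_{x|y}(\cdot)$ for the latter count,
\begin{equation*}
\tau(G^\circ - e^\circ) \. = \. \tau(H)\,\tau_{x|y}(H') \. + \. \tau_{x|y}(H)\,\tau(H').
\end{equation*}
The final ingredient is the standard bijection $\tau_{x|y}(G-e) = \tau(G/e)$, given by appending $e$ to a separating $2$-forest of $G-e$ to obtain a spanning tree of $G$ containing $e$, equivalently a spanning tree of $G/e$; likewise $\tau_{x|y}(G'-e') = \tau(G'/e')$. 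Substituting and dividing, the ratio telescopes:
\begin{equation*}
\br(G^\circ,e^\circ) \, = \, \frac{\tau(G-e)\,\tau(G'/e') \. + \. \tau(G/e)\,\tau(G'-e')}{\tau(G/e)\,\tau(G'/e')} \, = \, \br(G,e) \. + \. \br(G',e').
\end{equation*}

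The only mildly delicate step is the case analysis for the two-vertex amalgam; everything else is bookkeeping. An alternative, essentially equivalent route would be to invoke Kirchhoff's electrical interpretation: $\br(G,e)$ is the effective \emph{conductance} between $x$ and $y$ in the network $G-e$, and marked sum connects the networks $G-e$ and $G'-e'$ in parallel at the terminals $x,y$, whence conductances add. Either way, properness of $(G,e)$ and $(G',e')$ ensures the denominators are nonzero so the identity is well defined.
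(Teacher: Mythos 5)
Your proof is correct; the paper itself does not reproduce a proof but cites it to \cite[Lem.~5.4]{CP-SY}. The combinatorial argument you give is the natural one: the contraction step gives a one-vertex wedge, so \( \tau(G^\circ/e^\circ)=\tau(G/e)\,\tau(G'/e')\), and for the deletion step an edge count on the two-vertex amalgam (a spanning tree has \(|V|+|V'|-3\) edges, so the two restricted forests have component counts summing to three) forces exactly one side to be a spanning tree and the other a spanning 2-forest separating the two shared vertices, yielding \(\tau(G^\circ-e^\circ)=\tau(G-e)\,\tau(G'/e')+\tau(G/e)\,\tau(G'-e')\); the classical bijection \(\tau_{x|y}(G-e)=\tau(G/e)\) and division then finish it. One small remark: your case analysis as phrased only rules out ``both spanning trees'' and ``both separating 2-forests,'' but the edge-count argument is what actually restricts the restrictions to having component counts \((1,2)\) or \((2,1)\) in the first place --- you allude to this by calling the rest ``bookkeeping,'' but stating the count explicitly would make the argument airtight. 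The alternative electrical proof you sketch is also valid and gives conceptual clarity: by \eqref{eq:ratio-to-current} and the standard identity for effective resistance via separating forests, \(\br(G,e)\) is precisely the effective conductance between the endpoints of \(e\) in \(G-e\), and the marked sum wires \(G-e\) and \(G'-e'\) in parallel at those terminals, so conductances add. Either route proves the lemma; the combinatorial one has the advantage of being self-contained and not requiring the electrical dictionary.
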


\smallskip

\subsection{Plane duality}\label{ss:planar-dual}
Let \ts $G=(V,E)$ \ts be a proper planar graph, and let~$F$ be the set
of faces of~$G$.  Denote by \ts $G^\ast=(F,E)$ \ts the \defn{plane dual graph}.
Here we identify edges in~$G$ and~$G^\ast$.
Note that \ts $G^\ast$ \ts is also proper and planar.
By \emph{Euler's formula}, the number of vertices in \ts $G^\ast$ \ts
is given by \ts $|F| = |E|-|V| + 2$.  Recall also that \ts
$\tau(G^\ast) = \tau(G)$.

\smallskip
\begin{lemma}\label{lem:planar-dual}
	For a proper planar marked graph \ts $(G,e)$, we have:
\begin{equation}\label{eq:planar-dual}
\br(G^\ast,e) \, = \,   \frac{1}{\br(G,e)} \quad \text{and} \quad
\bI(G^\ast,e) \, = \, 1 \. - \. \bI(G,e).
\end{equation}	
\end{lemma}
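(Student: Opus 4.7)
The plan is to reduce both identities to two well-known facts from planar duality: (a) contraction and deletion are interchanged under the dual, so $(G/e)^\ast = G^\ast - e$ and $(G-e)^\ast = G^\ast/e$; and (b) the number of spanning trees is preserved, i.e.\ $\tau(G^\ast) = \tau(G)$. Both are standard, but I would briefly justify (a) by recalling that an edge and its dual cross, so removing the edge merges two dual faces (which is exactly contraction in $G^\ast$), while contracting the edge identifies its endpoints and thereby splits the dual face structure by deletion of the dual edge. Fact (b) then follows via the classical bijection between spanning trees of $G$ and spanning trees of $G^\ast$ (complementary edge sets).

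For the first identity, I would simply compute
\[
\br(G^\ast,e) \, = \, \frac{\tau(G^\ast - e)}{\tau(G^\ast/e)} \, = \, \frac{\tau((G/e)^\ast)}{\tau((G-e)^\ast)} \, = \, \frac{\tau(G/e)}{\tau(G-e)} \, = \, \frac{1}{\br(G,e)}\.,
\]
using (a) in the second step and (b) (applied to $G/e$ and $G-e$, which are planar) in the third. I would also note that properness of $(G^\ast,e)$ is inherited from that of $(G,e)$ under this correspondence, so all the ratios are well-defined.

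For the second identity, the cleanest route is to combine the first identity with formula \eqref{eq:ratio-to-current}: since $\bI(G,e)=1/(1+\br(G,e))$, we obtain
\[
\bI(G^\ast,e) \, = \, \frac{1}{1+\br(G^\ast,e)} \, = \, \frac{1}{1+\br(G,e)^{-1}} \, = \, \frac{\br(G,e)}{1+\br(G,e)} \, = \, 1 \. - \. \bI(G,e).
\]
Alternatively, one can argue directly: $\bI(G^\ast,e)=\tau(G^\ast/e)/\tau(G^\ast)=\tau(G-e)/\tau(G)$ by (a) and (b), and this equals $1-\tau(G/e)/\tau(G)=1-\bI(G,e)$ by the deletion--contraction identity $\tau(G)=\tau(G-e)+\tau(G/e)$.

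The only genuine content is the duality facts (a) and (b); both are completely standard in planar graph theory, so no step should present a real obstacle. The main thing to be careful about is verifying that $G^\ast$ remains proper (connected with $e$ a non-bridge) whenever $(G,e)$ is proper, which follows because $e$ is a bridge in $G^\ast$ iff $e$ is a loop in $G$ (excluded by assumption), and connectedness of $G^\ast$ is automatic for a connected plane graph.
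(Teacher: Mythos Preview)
Your proof is correct and follows essentially the same route as the paper: invoke the standard duality facts that deletion and contraction are interchanged (so $\tau(G-e)=\tau(G^\ast/e)$ and $\tau(G/e)=\tau(G^\ast-e)$), and then combine with \eqref{eq:ratio-to-current}. The paper's proof is a terse two-line version of what you wrote; your additional remarks on properness and the alternative direct argument via deletion--contraction are fine but not needed.
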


\smallskip

\begin{proof}  Note that deletion and contraction are dual operations.  Thus we have:
\begin{equation}\label{eq:planar-dual-proof}
	 \tau(G-e) \, = \,  \tau(G^*/e^*) \quad \text{and} \quad
	\tau(G/e) \, = \,  \tau(G^*-e^*).
\end{equation}
This and \eqref{eq:ratio-to-current} imply the result.
\end{proof}

\smallskip

\subsection{Simplification}\label{ss:halving}
Let  \ts $G=(V,E)$ \ts be a proper planar simple graph, and let \ts $e\in E$.
Define \defnb{doubling} of the marked graph \ts $(G,e)$ \ts as a marked
graph $(G',e)$, where \ts $G'=(V,E')$ \ts is obtained by duplication of
each edge other than~$e$.  Note that $G'$ is also proper and planar.

Similarly, define \defnb{halving} of the marked graph \ts $(G,e)$ \ts
as a marked graph $(G'',e)$, where \ts $G''=(V,E'')$ \ts
is obtained by subdivision of each edge other than~$e$.
Note that $G''$ is also proper, planar and simple.  Finally,
define \defnb{simplification} of \ts $(G,e)$ \ts as a marked graph
\ts $(G^\circ,e)$ \ts  obtained by first doubling and then halving.
Note that $G^\circ$ is again proper, planar and simple.


\smallskip

\begin{lemma}\label{lem:halving}
Let  \ts $G=(V,E)$ \ts be a proper planar simple graph, and let \ts $e\in E$.
Then:
\[ \br(G',e) \, = \,  2\ts \br(G,e)\., \quad \br(G'',e) \, = \,  \frac{\br(G,e)}{2}
\quad \text{and} \quad  \br(G^\circ,e) \, = \,  \br(G,e).
\]
\end{lemma}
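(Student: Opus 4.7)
My plan is to verify the three identities in sequence: first doubling, then halving via planar duality, and finally simplification as an immediate corollary.

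For the doubling identity $\br(G',e)= 2\ts\br(G,e)$, I would argue by direct spanning tree enumeration. The graph $G'-e$ is precisely the graph obtained from $G-e$ by duplicating every edge, and a spanning tree of such a duplication is specified by a spanning tree $T$ of the underlying graph together with a choice, for each $f\in T$, of which of the two parallel copies to include. Since every spanning tree of $G-e$ has $|V|-1$ edges, this yields $\tau(G'-e)=2^{|V|-1}\ts\tau(G-e)$. The analogous enumeration applied to $G'/e$, which is the duplication of $G/e$ on $|V|-1$ vertices, gives $\tau(G'/e)=2^{|V|-2}\ts\tau(G/e)$. Dividing these equalities produces the factor of~$2$.

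Next, for the halving identity $\br(G'',e)=\br(G,e)/2$, I would appeal to the observation in $\S 2.2$ that subdivision and duplication are planar-dual operations. Indeed, subdividing a non-bridge edge $f$ in $G$ preserves the face set, but the two new edges replacing $f$ each separate the same pair of faces, so in $G^\ast$ the dual edge $f$ acquires a parallel companion. Applying this edge by edge, the dual $(G'')^\ast$ is exactly the doubling of the marked graph $(G^\ast,e)$. Combining the doubling identity just established with Lemma~\ref{lem:planar-dual} then yields
\[
\br(G'',e) \, = \, \frac{1}{\br((G'')^\ast,e)} \, = \, \frac{1}{\br((G^\ast)',e)} \, = \, \frac{1}{2\ts\br(G^\ast,e)} \, = \, \frac{\br(G,e)}{2}\..
\]

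The simplification identity is now immediate: by construction $G^\circ$ is obtained by first doubling $(G,e)$ to produce $(G',e)$ and then halving the result, so the two previous identities give $\br(G^\circ,e)=\br(G',e)/2=\br(G,e)$. The only remaining sanity check is that $G^\circ$ really is simple, which I would record briefly: doubling introduces pairs of parallel copies of each non-$e$ edge, and halving then subdivides each copy independently, so two originally parallel edges become two internally disjoint paths of length two sharing only their endpoints, ruling out both loops and multi-edges. The only step requiring genuine combinatorial input is the weighted spanning tree count in the doubling step; halving is then a formal consequence of planar duality, and simplification is purely bookkeeping.
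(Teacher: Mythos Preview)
Your proof is correct and follows essentially the same route as the paper: direct spanning-tree enumeration for the doubling identity, planar duality to transfer this to the halving identity, and then composition for simplification. In fact your duality step is stated more carefully than the paper's---the paper writes ``$(G')^\ast = G''$,'' which is literally false, whereas your formulation $(G'')^\ast = (G^\ast)'$ is the correct identity that makes the argument go through.
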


\smallskip

\begin{proof}
For the first equality, we have:
$$
\tau(G'-e) \, = \, 2^{|V|-1} \. \tau(G-e) \quad \text{and} \quad \tau(G'/e) \, = \, 2^{|V|-2} \. \tau(G/e),
$$
as desired.  Now, note that \ts $(G')^\ast = G''$.  Thus the second equality
 follows from the first and \eqref{eq:planar-dual}.  The third equality follows from the
 first two.
\end{proof}
	
\medskip

\section{Proofs of results}\label{s:proof-main}

\subsection{Proof of Theorem~\ref{thm:graph-sum} }
Let \ts $q_1,\ldots, q_k \in \qqq_{>0}$ \ts be positive rational numbers as in the theorem.
It follows from Proposition~\ref{prop:graph-CF}, that there exists
(not necessarily simple) planar marked graphs \. $(G_i,e_i)$, where \. $G_i = (V_i, E_i)$,
such that
 \begin{align*}
 	\br(G_i,e_i) \, = \,  q_i \quad \text{ and } \quad |E_i| \, = \, \aS(q_i) \ts + \ts 1,
 \end{align*}
for all \ts $1\le i\le k$.  Let \. $(G,e):= (G_1,e_1) \ts \oplus  \ts \cdots \ts \oplus \ts (G_k,e_k)$, where \. $G=(V,E)$.
 It follows from  Lemma~\ref{lem:marked-sum}, that
\begin{align*}
  \br(G,e) \ &= \  \br(G_1,e_1) \. + \. \ldots \. + \. \br(G_k,e_k)\ = \ q_1 \. + \. \ldots \. + \. q_k \qquad \text{and}\\
  |E| \ &= \ |E_1| \. + \. \ldots \. + \. |E_k| \, - \, k \ts + \ts 1 \ = \  \aS(q_1) \. + \. \ldots\. + \. \aS(q_k)\ts +\ts 1.
\end{align*}
Since the marked graph \. $(G,e)$ \. is not necessarily simple, take its simplification \ts
$(G^\circ,e)$, where \ts $G^\circ = (V^\circ,E^\circ)$.
It then follows from Lemma~\ref{lem:halving} that \ts $(G^\circ,e)$ \ts
is a simple planar marked graph satisfying \. $\br(G^\circ,e) = \br(G,e)$ \.
and
$$
|E^\circ| \, = \, 4(|E|-1) \. + \. 1 \, = \, 4\bigl(\aS(q_1) \ts + \ts \ldots\ts + \ts \aS(q_k)\bigr) \ts +\ts 1.
$$
 This completes the proof. \qed

 \smallskip

 \subsection{Proof of Theorem~\ref{t:main}}
Let \. $\ad:=\at-\ac$. Using \eqref{eq:ratio-to-current}, it suffices to show
that there exists a simple planar graph \ts $G=(V,E)$ \ts and edge \ts $e\in E$,
such that for sufficiently large $\at$ we have:
\begin{equation}\label{eq:Kir-proof}
	\br(G,e) \ = \  \frac{\ad}{\ac} \qquad \text{ and } \qquad |E| \ \leq \ C \.
\max \bigg \{\. \frac{\ad}{\ac}\., \, \frac{\ac}{\ad}\., \,\log (\ac+\ad)  \bigg\},
\end{equation}
for some universal constant $C>0$. We prove \eqref{eq:Kir-proof} in the following series of lemmas.
Denote by \ts $C_0>0$ \ts the universal constant in Theorem~\ref{thm:Bou}.

\smallskip

\begin{lemma}\label{lem:Kir-proof-1}
Condition \eqref{eq:Kir-proof} holds for \. $\ad/\ac \. \geq \. \lceil C_0 \log (\ac+\ad)\rceil$.
\end{lemma}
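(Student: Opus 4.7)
The plan is to express $\ad/\ac$ as a sum of strictly positive rationals with total $\aS$-weight $O(\ad/\ac)$, and then invoke Theorem~\ref{thm:graph-sum}. The Bourgain decomposition from Theorem~\ref{thm:Bou} would reduce the weight to $O(\log(\ac+\ad))$, but it produces signed summands in $(-1,1)$, whereas Theorem~\ref{thm:graph-sum} requires positivity. The main obstacle is to convert negatives to positives without inflating the weight, and the hypothesis $\ad/\ac \geq \lceil C_0 \log(\ac+\ad)\rceil$ is exactly what lets us do so.

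First, split $\ad/\ac = m + r/\ac$ with $m := \lfloor \ad/\ac \rfloor$ and $r := \ad - m\ac \in \{0,1,\ldots,\ac-1\}$. If $r=0$, then writing $\ad/\ac$ as $m$ copies of $1$ (each of $\aS$-weight $1$) and applying Theorem~\ref{thm:graph-sum} immediately gives $|E| = 4m+1 = O(\ad/\ac)$. Otherwise, apply Theorem~\ref{thm:Bou} to $r/\ac \in (0,1)$ to obtain a decomposition $r/\ac = q_1 + \cdots + q_k$ with $q_i \in (-1,1) \cap \qqq \setminus \{0\}$ and $\sum_i \aS(|q_i|) \leq C_0 \log(\ac+\ad)$.

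Next, letting $k_-$ count the negative $q_i$'s, the hypothesis guarantees $m \geq \lceil C_0 \log(\ac+\ad)\rceil \geq \sum_i \aS(|q_i|) \geq k_-$, so there are enough unit summands in reserve. For each negative $q_i = -p/q$, absorb one unit and replace the pair $(1,q_i)$ by the single positive fraction $1+q_i = (q-p)/q \in (0,1)$. The resulting expression
\[
\ad/\ac \, = \, (m - k_-) \cdot 1 \; + \; \sum_{q_i > 0} q_i \; + \; \sum_{q_i < 0}\bigl(1+q_i\bigr)
\]
is a decomposition into strictly positive rationals.

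The hard part is verifying the total weight does not blow up, which reduces to the identity $\aS(p/q) = \aS((q-p)/q)$ for a reduced fraction $p/q \in (0,1)$. A short case analysis on the first partial quotient $a_1$ of $p/q = [0; a_1,\ldots,a_\ell]$ suffices: the continued fraction of $1 - p/q$ is $[0; 1, a_1-1, a_2, \ldots, a_\ell]$ when $a_1 \geq 2$, and $[0; a_2+1, a_3, \ldots, a_\ell]$ when $a_1 = 1$, so in both cases the partial quotients sum to $a_1 + \cdots + a_\ell$. Hence the total $\aS$-weight of the new decomposition is at most $(m - k_-) + \sum_i \aS(|q_i|) \leq m + C_0 \log(\ac+\ad) \leq 2m$. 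Applying Theorem~\ref{thm:graph-sum} then yields a simple planar marked graph $(G, e)$ with $\br(G, e) = \ad/\ac$ and $|E| \leq 8m + 1 = O(\ad/\ac)$. Since $\ad/\ac \geq 1$ (so $\ad/\ac \geq \ac/\ad$) and $\ad/\ac \geq \log(\ac+\ad)$ in this regime, the quantity $\ad/\ac$ dominates the maximum in \eqref{eq:Kir-proof}, closing the argument.
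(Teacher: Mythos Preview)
Your proof is correct and follows essentially the same approach as the paper: separate off the integer part $m=\lfloor \ad/\ac\rfloor$, apply Bourgain's theorem to the fractional part, use the hypothesis to guarantee enough units to absorb the negative summands via the continued-fraction identity for $1-q$, and then invoke Theorem~\ref{thm:graph-sum}. The only difference is bookkeeping---the paper shifts \emph{every} Bourgain summand $q_i'$ by $+1$ and dumps the remaining integer into $q_1$, whereas you shift only the negative summands and keep the leftover integer as explicit copies of~$1$---but the argument and the resulting bounds are the same.
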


\smallskip

\begin{proof}
Let \. $\ac',\ad'$  \. be coprime positive integers defined as
 \[ \frac{\ad'}{\ac'} \ = \  \frac{\ad}{\ac} \mod 1,
 \]
so we have \. ${\ad'}/{\ac'} \in [0,1)$.
 Note that \. $\ac'+\ad'\leq \ac+\ad$.
 Now, by  Theorem~\ref{thm:Bou},   there exist rational numbers \. $q_1',\ldots, q_k'  \in (-1,1)$ \. such that
 \begin{align}
& \frac{\ad'}{\ac'} \ = \  q_1' \. + \. \ldots \. + \. q_k' \qquad \text{and} \notag  \\
 \label{eq:Kir-0.5}	& \aS\big(|q_1'|\big) \. + \. \ldots \. + \. \aS\big(|q_k'|\big) \ \leq \ C_0 \log (\ac'+\ad') \ \leq \ C_0 \log (\ac+\ad). 
 \end{align}
 Note that it follows from the assumed lower bound on $d/c$ and \eqref{eq:Kir-0.5}
 that
 \begin{equation}\label{eq:k-bound}
 k \ \leq \  C_0 \log(\ac+\ad) \ \leq \  \bigg\lfloor \frac{\ad}{\ac} \bigg\rfloor.
 \end{equation}

 We now define \ts $q_1,\ldots,q_k$ \ts  as
 \[
q_1 \, := \,  \bigg\lfloor	\frac{\ad}{\ac} \bigg\rfloor  -k  + 1 + q_1' \qquad\text{and} \qquad
 q_i \ := \ 1+q_i' \ \  \text{ for all } \ i \geq 2.
 \]
 It follows that \ts $q_1 > 0$ \ts by \eqref{eq:k-bound}, and  \ts $q_2,\ldots, q_k>0$ \ts by definition.
 Also note that
 \begin{equation*}
 	    \frac{\ad}{\ac} \ = \  q_1 \. + \. \ldots \. + \. q_k\..
 \end{equation*}
Observe that for a rational number \. $q=[a_1,\ldots, a_\ell] \in \qqq\cap(0,1)$, we have:
  \begin{align*}
  	1-q \ = \
  	\begin{cases}
  		[1,a_1-1,a_2,\ldots, a_\ell] & \text{ if } a_1 >1,\\
  		[a_2+1,\ldots, a_\ell] & \text{ if } a_1=1.
  	\end{cases}
  \end{align*}
  This implies that
  \[ \aS(1+q_i') \  \leq \ 1+ \aS(|q_i|') \quad \text{for all} \quad 1\le i \le k.
   \]
  Therefore, we have:
  \begin{equation*}
  	   \aS\big(q_1\big)  \. + \. \ldots \. + \.  \aS\big(q_k\big) \ = \
   \bigg\lfloor	\frac{\ad}{\ac} \bigg\rfloor \. - \. k  \. + \.   \aS(1+q_1') \. + \. \ldots \. + \.   \aS(1+q_k') \ \leq \   \. \frac{\ad}{\ac} \. + \.  C_0 \ts \log (\ac+\ad),
  \end{equation*}
where the  inequality is due to  \eqref{eq:Kir-0.5}.

By Theorem~\ref{thm:graph-sum}, there exists a simple planar graph \ts $G=(V,E)$ \ts and edge \ts $e\in E$,
such that
\begin{align*}
	& \br(G,e) \ = \  q_1\. + \. \ldots \. + \.  q_{k}  \ = \   \. \frac{\ad}{\ac} \qquad \text{and} \\
	& |E| \  = \ 4\ts \big(\aS(q_1) \. + \. \ldots \. + \. \aS(q_k)\big) \. + \. 1
	\ \leq \ 4 \. \frac{\ad}{\ac} \. + \.  4\. C_0 \ts \log (\ac+\ad) \. + \. 1.
\end{align*}
Thus \. $(G,e)$ \. satisfies \eqref{eq:Kir-proof} with a constant
\[  C_1 \ := \  4 \. C_0 \. + \. 5.
\]
This  completes the proof of the lemma.
\end{proof}

\smallskip

\begin{lemma}\label{lem:Kir-proof-2}
Condition \eqref{eq:Kir-proof} holds for \. $\ad/\ac \ts \geq 1$.
\end{lemma}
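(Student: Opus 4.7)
The case $d/c \geq \lceil C_0 \log(c+d)\rceil$ follows from Lemma~\ref{lem:Kir-proof-1}, so I focus on $1 \leq d/c < \lceil C_0 \log(c+d)\rceil$. In this regime each of the three terms inside the max in \eqref{eq:Kir-proof} is $O(\log(c+d))$, and the goal reduces to exhibiting a simple planar $(G,e)$ with $\br(G,e) = d/c$ and $|E| = O(\log(c+d))$.

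My plan is to apply Lemma~\ref{lem:Kir-proof-1} to an enlarged coprime pair and then extract $(G,e)$ from the resulting construction. Let $K := \lceil C_0 \log(c+d)\rceil$ and set $(d',c') := (d+Kc,c)$, which is coprime since $\gcd(c,d)=1$. Then $d'/c' = K + d/c$, and an elementary estimate (absorbing the extra additive constants into $C_0$) gives $d'/c' \geq C_0 \log(c'+d')$, so Lemma~\ref{lem:Kir-proof-1} yields a simple planar marked graph $(H,e_H)$ with $\br(H,e_H) = K + d/c$ and $|E(H)| = O(\log(c+d))$.

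Next, I would trace through the construction of $(H,e_H)$: by the proof of Lemma~\ref{lem:Kir-proof-1} together with Theorem~\ref{thm:graph-sum}, it is built as the marked sum of pieces arising from Bourgain's decomposition of $\{d/c\}$ augmented by $K + \lfloor d/c\rfloor$ integer units. The plan is to reorganize this decomposition so that $K$ of those integer units form a ``reservable'' sub-marked-sum $(F,e_F)$ with $\br(F,e_F)=K$ and $|E(F)| = O(\log(c+d))$ (constructed, say, via Proposition~\ref{prop:graph-CF} applied to the continued fraction $[K]$, followed by the simplification step of Theorem~\ref{thm:graph-sum}). The remaining pieces then assemble into $(G,e)$ with $\br(G,e) = (K+d/c)-K = d/c$ and $|E(G)| = |E(H)| - |E(F)| + O(1) = O(\log(c+d))$, while simplicity and planarity are preserved by the simplification argument.

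The hard part is to verify that the reorganization can be carried out: after reserving $K$ integer units for $(F,e_F)$, only $\lfloor d/c\rfloor$ ``1''s remain inside $(G,e)$ to absorb all negative terms in the Bourgain decomposition of $\{d/c\}$. When the number of negative Bourgain terms is at most $\lfloor d/c\rfloor$, this is immediate and mirrors the argument of Lemma~\ref{lem:Kir-proof-1}; otherwise one must group several small-magnitude negative terms against a single ``1'' inside $(G,e)$ and carefully bound the $\aSr$-complexity of the combined fractions so that the overall $O(\log(c+d))$ edge bound is preserved. Managing this delicate subcase, and checking that all intermediate marked ratios remain positive throughout, is the principal technical obstacle.
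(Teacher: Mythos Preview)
Your reduction to the regime $1\le d/c<\lceil C_0\log(c+d)\rceil$ is fine, but the rest has a genuine gap, and it is precisely the subcase you flag as ``the hard part''.  Passing to $(d',c')=(d+Kc,c)$ and then ``reorganizing'' the construction of $(H,e_H)$ so that $K$ integer units sit in a discardable piece $(F,e_F)$ simply brings you back to the original obstruction: you are left trying to run the Lemma~\ref{lem:Kir-proof-1} argument on $d/c$ itself with only $\lfloor d/c\rfloor$ integer units, and that argument needs $k\le\lfloor d/c\rfloor$, which can fail here.  Your proposed fix, grouping several negative Bourgain terms $q_{i_1}',\ldots,q_{i_m}'$ against a single ``$1$'', does not work for two independent reasons.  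First, positivity: each $q_{i_j}'\in(-1,0)$ but nothing prevents $1+q_{i_1}'+\cdots+q_{i_m}'\le 0$.  Second, and more seriously, even when the combined term is positive, $\aS$ is not subadditive or in any way controlled under addition: for rationals $r,s$ with small $\aS(r),\aS(s)$ the quantity $\aS(r+s)$ can be arbitrarily large (indeed the whole content of Theorem~\ref{thm:Bou} is that a single rational with large $\aS$ can be split into pieces with small $\aS$; combining pieces reverses this).  So the phrase ``carefully bound the $\aS$-complexity of the combined fractions'' hides exactly the difficulty the lemma is meant to overcome.

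The paper takes a completely different route that avoids this additive obstruction by exploiting planar duality (Lemma~\ref{lem:planar-dual}).  One shifts $d/c$ by a rational $L/K$ with $K\asymp\log(c+d)$ so that the residual $d'/c'$ lands in $[1/K,2/K)$; then $c'/d'\ge K/2$ is large enough for Lemma~\ref{lem:Kir-proof-1} to apply to the \emph{reciprocal} $c'/d'$, and dualizing the resulting graph flips the spanning tree ratio back to $d'/c'$.  The remaining piece $d/c-d'/c'$ has denominator dividing $Kc$, so its continued fraction has $\aS\le d/c+K=O(\log(c+d))$ and is handled directly by Theorem~\ref{thm:graph-sum}.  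The marked sum of these two pieces, followed by simplification, gives the desired simple planar $(G,e)$.  The key idea you are missing is this use of reciprocation via duality to manufacture the integer slack that Lemma~\ref{lem:Kir-proof-1} needs.
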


\smallskip

\begin{proof}
Let \. $K:= 4 \ts \lceil  \. C_1 \. \log (\ac+\ad) \rceil$\..
Let \. $L \in \{0,\ldots,K-1\}$ \. be the unique  integer such that
\begin{equation*}
	\frac{\ad}{\ac} \. - \. \frac{L}{K} \ \in \ \big[{1}/{K}, {2}/{K}\big)  \mod 1.
\end{equation*}
Let \. $\ac',\ad'$  \. be coprime positive integers defined as
\begin{equation}\label{eq:dc-0}
 \frac{\ad'}{\ac'} \ = \  \frac{\ad}{\ac} \. - \.  \frac{L}{K} \mod 1,
\end{equation}
so we have
\begin{equation}\label{eq:dc1}
\frac{\ad'}{\ac'} \,\in \,\big[1/K, 2/K\big)\ts.
\end{equation}
Note that
\begin{equation*}
 \log(\ac'+\ad') \, \leq   \, \log(\ac+\ad) \. + \. \log  K  \, \leq \, 2 \ts \log(\ac+\ad),
\end{equation*}
for sufficiently large \. $(\ac+\ad)$.
Note  also that
\begin{equation*}
	\frac{\ac'}{\ad'} \ \geq_{\eqref{eq:dc1}} \ \frac{K}{2} \ = \ 2 \. \lceil C_1 \log (\ac+\ad) \rceil \ \geq \ C_1 \. \lceil \log(\ac'+\ad')\rceil.
\end{equation*}

By Lemma~\ref{lem:Kir-proof-1} applied to \ts $\ac'/\ad'$, there exists a simple
planar graph \ts $G=(V,E)$ \ts and an edge \ts $e \in E$, satisfying
\begin{align*}
& \br(G,e) \ = \  \frac{\ac'}{\ad'} \qquad \text{ and } \\
 & |E| \ \leq \ C_1 \.
 \left (\ts \frac{\ac'}{\ad'} \.  + \. \log (\ac'+\ad') \ts  \right) \  \leq_{\eqref{eq:dc1}} \ C_1 \.
 \big ( K \.  + \. \log (\ac'+\ad')  \big) \leq \  C_1 \. (4 \ts C_1+2) \. \log(\ac+\ad).
\end{align*}
Let \ts $G^\ast=(F,E)$ \ts be the plane dual of \ts $G$.  The marked graph \ts $(G^\ast,e)$ \ts satisfies
\begin{align*}
	\br(G^\ast,e) \ = \  \frac{\ad'}{\ac'} \qquad \text{ and } \qquad |E| \ \leq  \  C_1 \. (4 \. C_1+2) \. \log(\ac+\ad).
\end{align*}
Note that \. $G^\ast$ \. is not necessarily simple.

In a different direction, we have \. $\ad/\ac \ge 1 \ge \ad'/\ac'$ \. by assumption in the lemma.
Also note that \. $K(d/c- d'/c')$ \. is an integer by \eqref{eq:dc-0}.
By Theorem~\ref{thm:graph-sum} applied to \. $q_1\gets (d/c - d'/c')$ \ts and \ts $k=1$,
there exists a planar graph \ts $G'=(V',E')$ \ts and an edge \ts $e' \in E'$, such that
\begin{align*}
	\br(G',e') \ &= \  \frac{\ad}{\ac} \. - \. \frac{\ad'}{\ac'} \quad	\text{and} \quad
|E'| \ \leq \  4 \. \aS\left(\frac{\ad}{\ac} \. - \. \frac{\ad'}{\ac'}\right) \. + \. 1\..\end{align*}
Note that
\begin{align*}
		|E'| \ & \leq \  4 \. \aS\bigg(\frac{\ad}{\ac} -\frac{\ad'}{\ac'}\bigg) \. + \. 1 \  \leq \ 4 \. \bigg(\frac{\ad}{\ac} \. + \. K \bigg) \. + \. 1 \   \leq \ 4 \. \bigg(\frac{\ad}{\ac} \. + \.  4 \. C_1 \. \log (\ac+\ad)\bigg) \. + \. 17,
\end{align*}
where the second inequality is because \. $K(\ad/\ac- \ad'/\ac')$ \. is an integer.

Now, let \ts $(G^\circ,f)$, where \ts $G^\circ=(V^\circ,E^\circ)$ \ts and \ts $f\in E^{\circ}$,
be the marked graph obtained by taking the marked sum \. $(G^\ast,e) \. \oplus \. (G',e')$ \.
followed by simplification. By construction,  $H$ is a simple planar  graph.
It follows from Lemma~\ref{lem:marked-sum} and Lemma~\ref{lem:halving}, that
\begin{align*}
	\br(G^\circ,f) \ &= \  \br(G^\ast,e) \. + \. \br(G',e')  \ = \  \frac{\ad}{\ac} \qquad \text{and}\\
	|E^\circ| \ &\leq \ 4 \ts\big(|E| \ts + \ts |E'| \ts - 2\big) \. + \. 1
\ \leq \  16 \. \frac{\ad}{\ac} \. + \. 8 \ts C_1 \ts (2 \ts C_1 \ts + \ts 9) \. \log (\ac+\ad) \.  + \.  61\ts.
\end{align*}
Thus \. $(G^\circ,f)$ \. satisfies \eqref{eq:Kir-proof} with a constant
$$
C_2 \ := \  16 \ts C_1^2 \. + \.  72\ts C_1 \. + \.  77\ts.
$$
This  completes the proof of the lemma.
\end{proof}

\smallskip

\begin{lemma}\label{lem:Kir-proof-3}
Condition \eqref{eq:Kir-proof} holds for \. $\ad/\ac \. < \. 1$.
\end{lemma}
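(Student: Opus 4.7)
The plan is a straightforward reduction to Lemma~\ref{lem:Kir-proof-2} via planar duality. Since $\ad/\ac < 1$, the reciprocal ratio $\ac/\ad$ strictly exceeds~$1$ and therefore lies in the regime already handled by the previous lemma. First, I would invoke Lemma~\ref{lem:Kir-proof-2} with the coprime pair $(\ad,\ac)$ playing the role of $(\ac,\ad)$, producing a simple planar marked graph $(G,e)$, where $G=(V,E)$, with
\[
\br(G,e) \, = \, \frac{\ac}{\ad}
\qquad\text{and}\qquad
|E| \, \leq \, C_2 \. \max\bigg\{\.\frac{\ac}{\ad}\., \. \frac{\ad}{\ac}\., \. \log(\ac+\ad)\bigg\}\ts.
\]

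Next, I would pass to the plane dual $(G^\ast,e)$. By Lemma~\ref{lem:planar-dual} we have $\br(G^\ast,e) = 1/\br(G,e) = \ad/\ac$, and the edge count is preserved; however, $G^\ast$ need not be simple. To restore simpleness, I would apply the simplification of $\S$\ref{ss:halving} (doubling followed by halving, extended to non-simple inputs exactly as in the proofs of Theorem~\ref{thm:graph-sum} and Lemma~\ref{lem:Kir-proof-2}) to obtain a simple planar marked graph $(G^\circ,e)$ with $G^\circ=(V^\circ,E^\circ)$. By Lemma~\ref{lem:halving} the spanning tree ratio is unchanged, $\br(G^\circ,e)=\ad/\ac$, and a direct count gives $|E^\circ| = 4\ts(|E|-1) + 1 \leq 4\ts|E|$. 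Hence $(G^\circ,e)$ satisfies \eqref{eq:Kir-proof} with constant $C_3 := 4\ts C_2$, completing the proof.

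There is really no substantial obstacle in this case: all the analytic heavy lifting (Theorem~\ref{thm:Bou}) was already absorbed into Lemma~\ref{lem:Kir-proof-2}, and the case $\ad/\ac < 1$ is a purely graph-theoretic ``reciprocate and simplify'' reflection of it, using Lemmas~\ref{lem:planar-dual} and~\ref{lem:halving}. The only minor subtlety to double-check is that simplification applied to the possibly non-simple planar dual $G^\ast$ still yields a simple planar marked graph with the claimed edge count formula, but this is precisely the same usage already made in the earlier proofs and requires no new argument.
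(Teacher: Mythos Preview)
Your proposal is correct and follows essentially the same approach as the paper: apply Lemma~\ref{lem:Kir-proof-2} to the reciprocal $\ac/\ad>1$, take the planar dual to invert the spanning tree ratio, and then simplify to restore simpleness, yielding the constant $4\ts C_2$. The only cosmetic difference is that you carry the redundant term $\ad/\ac$ inside the $\max$, which is harmless since $\ad/\ac<1<\ac/\ad$.
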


\begin{proof}
	Since $\ac/\ad > 1$,  applying Lemma~\ref{lem:Kir-proof-2} to $\ac/\ad$ gives  a simple planar marked graph $(G,E)$ satisfying
\begin{align*}
	\br(G,e) \  = \  \frac{\ac}{\ad} \quad \text{and} \quad |E| \ \leq \ C_2  \. \max \bigg \{\.   \frac{\ac}{\ad}\., \,\log (\ac+\ad)  \bigg\}.
\end{align*}
Apply the planar dual and then the simplification operations to the marked graph
\ts $(G,e)$ \ts to obtain a simple planar marked graph \ts $(G^\circ,e)$,
where \ts $G^\circ=(V^\circ,E^\circ)$ \ts and \ts $e\in E^\circ$.  We have:
\begin{align*}
	& \br(G^\circ,e) \ = \ \frac{1}{\br(G,e)} \ = \ \frac{\ad}{\ac} \qquad \text{and} \\
	& |E^\circ| \ = \  \ 4 \. \big(|E| \ts - \ts 1) \. + \. 1 \  \leq  \ 4\ts |E| \  \leq  \  4 \.  C_2  \. \max \bigg \{\.   \frac{\ac}{\ad}\., \,\log (\ac+\ad)  \bigg\}.
\end{align*}
Thus \. $(G^\circ,e)$ \. satisfies \eqref{eq:Kir-proof} with a constant \. $4\ts C_2$\ts.
This completes the proof of the lemma and finishes the proof of the theorem.
\end{proof}

\smallskip

\subsection{Proof of Proposition~\ref{p:main-lower}}\label{ss:lower}
The proposition consists of three inequalities which we prove separately.
For the first and second inequality, we need the following result:

\smallskip

\begin{lemma}  \label{l:commute}
For every connected graph \ts $G=(V,E)$ \ts and edge \ts $e=(x,y)\in E$,
we have:
$$
\bI(G,e) \, \ge \, \frac{1}{2} \left(\frac{1}{\deg(x)} \. + \. \frac{1}{\deg(y)}\right),
$$
where \ts $\deg(v)$ \ts denotes the degree of vertex \ts $v\in V$.
\end{lemma}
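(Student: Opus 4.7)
The plan is to identify \ts $\bI(G,e)$ \ts with the effective resistance \ts $R(x,y)$ \ts between \ts $x$ \ts and \ts $y$ \ts in the electrical network obtained by assigning unit resistance to every edge of \ts $G$, and then to apply a standard voltage argument. The identification \ts $\bI(G,e) = R(x,y)$ \ts is a classical consequence of Kirchhoff's matrix-tree theorem: since \ts $\tau(G/e)$ \ts counts the spanning trees of \ts $G$ \ts that contain \ts $e$, the ratio \ts $\tau(G/e)/\tau(G)$ \ts equals the probability that a uniformly random spanning tree of \ts $G$ \ts contains \ts $e$, and by the transfer current theorem this probability equals \ts $R(x,y)$.

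Given this identification, I would prove the two separate bounds
\[
R(x,y) \, \ge \, \frac{1}{\deg(x)} \qquad \text{and} \qquad R(x,y) \, \ge \, \frac{1}{\deg(y)}\.,
\]
and conclude by averaging, using that the arithmetic mean of two lower bounds is also a lower bound. For the first of these, impose boundary voltages \ts $V(x)=1$, \ts $V(y)=0$, and extend \ts $V$ \ts harmonically to the remaining vertices of \ts $G$. By the maximum principle, \ts $V(z) \in [0,1]$ \ts for every \ts $z \in V$. The current flowing out of \ts $x$ \ts is
\[
I_x \, = \, \sum_{z \sim x}\bigl(V(x) - V(z)\bigr) \, = \, \deg(x) \. - \. \sum_{z \sim x} V(z) \, \le \, \deg(x),
\]
while by Ohm's law and the definition of effective resistance, \ts $I_x = (V(x) - V(y))/R(x,y) = 1/R(x,y)$. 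Combining yields \ts $R(x,y) \ge 1/\deg(x)$. Swapping the roles of \ts $x$ \ts and \ts $y$ \ts gives the symmetric bound.

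I anticipate no essential obstacle; the only step beyond routine is the identification of \ts $\bI(G,e)$ \ts with the effective resistance, which is standard. As an alternative route, one can argue directly from the random walk interpretation recalled in the introduction: the probability that a simple random walk started at \ts $x$ \ts reaches \ts $y$ \ts before first returning to \ts $x$ \ts equals \ts $1/(\deg(x) \cdot \bI(G,e))$, and being a probability is at most \ts $1$, which yields the same inequality without explicitly invoking harmonic functions.
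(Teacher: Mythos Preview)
Your argument is correct; in fact, you prove the sharper inequality \ts $\bI(G,e)\ge \max\{1/\deg(x),\,1/\deg(y)\}$ \ts and then weaken it by averaging. The paper, by contrast, does not give a self-contained proof: it invokes the commute-time identity \ts $\kappa(G,e)=2|E|\,\bI(G,e)$ \ts from \cite{C+96} together with the lower bound \ts $\kappa(G,e)\ge |E|\bigl(1/\deg(x)+1/\deg(y)\bigr)$ \ts from \cite{Lov-survey}, and divides. Your route is more elementary and self-contained --- it is essentially Rayleigh monotonicity (short every vertex except $x$ to obtain effective conductance at most $\deg(x)$), or equivalently the escape-probability bound you mention at the end. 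The paper's route has the virtue of pointing to the commute-time literature, but at the cost of two external citations for what is, as you show, a one-line harmonic estimate.
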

\smallskip

The lemma is well known via connection to the random walks on~$G$ and the
\emph{commute time} \ts interpretation 
$$\kappa(G,e) \, = \, 2\ts |E|  \bI(G,e)
$$
given in \cite[Thm~2.1]{C+96}, combined with the inequality  
$$\kappa(G,e) \, \ge \,  |E| \.  \left(\frac{1}{\deg(x)} \. + \. \frac{1}{\deg(y)}\right), 
$$
see e.g.\ \cite[Cor.~3.3]{Lov-survey}.  Since \ts $G$ \ts is simple, we have \ts $\deg(x),\deg(y)<|V|$.
This implies the first inequality:
\begin{equation}\label{eq:lower-part1}
|V| \, > \, \frac{1}{\bI(G,e)} \, = \, \frac{\at}{\ac}\..
\end{equation}

For the second inequality, denote by $F$ the set of faces in~$G$.
Since \ts $G$ \ts is simple and planar, it follows from Euler's formula
that \ts $|E| \le 3 \ts |V|-6$ \ts and \ts $|F| \le 2 \ts |V|-4$.
Applying Lemma~\ref{l:commute} to the dual graph \ts $G^\ast=(F,E)$, we conclude:
\begin{equation}\label{eq:lower-part2}
|V|\, > \, \frac{|F|}{2} \, >_{\eqref{eq:lower-part1}} \, \frac{1}{2 \bI(G^\ast,e)} \,
=_{\eqref{eq:planar-dual}} \,
\frac{1}{2\bigl(1-\rho(G,e)\bigr)} \, = \, \frac{1}{2}\left(\frac{\at}{\at-\ac}\right).
\end{equation}
Finally, for the third inequality, we have:
\begin{equation}\label{eq:lower-part3}
\at \, \le \, \tau(G) \, < \, 2^{|E|} \, < \, 2^{3\ts |V|}\.,
\end{equation}
as desired.  \qed

%

%

\medskip

\section{Final remarks and open problems}\label{s:finrem}

\subsection{} \label{ss:finrem-Hall}
Bourgain's Theorem~\ref{thm:Bou} was partially motivated by
Hall's classic result that every number in the interval \ts
$(\sqrt{2}-1,4\sqrt{2}-4)$ \ts can be presented as
the sum of two continued fractions whose partial quotients
do not exceed four \cite[Thm~3.1]{Hall47}.  Han\v{c}l and
Turek \cite{HT23} gave a version of this result for partial
fractions of the type $[a_1,1,a_2,1,a_3,1,\ldots]$\ts.
In our previous paper \cite{CKP}, we used partial fractions
of this type to study Sedl\'a\v{c}ek's problem, see~$\S$\ref{ss:intro-Sedlacek}.
While initially, we intended to obtain Theorem~\ref{t:main}
via a finite version of the Han\v{c}l--Turek result,
this turned out to be unnecessary due to the
simplification operation given in~$\S$\ref{ss:halving}.

\subsection{} \label{ss:finrem-LE}
For a finite poset \ts$P=(X,\prec)$, the \emph{relative number
of linear extensions} \. $e(P-x)/e(P)$ \ts plays roughly the
same role as the effective resistance for graphs.  In \cite{CP-CF},
we used recent analytic estimates on sums of quotients of continued
fractions to show that for all \ts $d/3\ge c\ge 1$ \ts there exists a poset
\ts $P=(X,\prec)$ \ts and an element \ts $x\in X$, such that
$$
\frac{e(P-x)}{e(P)} \, = \, \frac{c}{d} \quad \text{and} \quad |X| \. \le \. C \ts
\max\left\{\ts \frac{d}{c} \. , \, \log d \. \log \log d \ts\right\}.
$$
By analogy with Theorem~\ref{t:main}, one can hope to remove the \ts $\log \log d$ \ts
factor; this was in fact conjectured in \cite[Eq.~(1.7)]{CP-CF}.  Unfortunately,
the approach in this paper is not applicable since the analogue of the marked sum
for posets called the \emph{flip-flop construction} \ts defined in \cite[$\S$3.3]{CP-CF}
cannot be used more than once.

\subsection{} \label{ss:finrem-const}
Throughout the paper we made no effort to compute or optimize the constants.
This is
in part
because the constant $C$ in Bourgain's Theorem~\ref{thm:Bou} is
not specified.  We note, however, that the constant~$C'$ in Proposition~\ref{p:main-lower}
can be easily computed.  In fact, a more careful argument shows that one
can remove the constant $1/2$ in the inequality \eqref{eq:lower-part2}.
Similarly, one can improve \eqref{eq:lower-part3} by using the
\ts $\tau(G) < 5.23^{|V|}$ \ts bound in \cite{BS10}.  After these
improvements, one can take \ts $C':=0.6$.

\vskip.6cm
{\small

\subsection*{Acknowledgements}
We are grateful to Noga Alon, Matija Buci\'c, Peter Sarnak, Ilya Shkredov
and Nikita Shulga for helpful comments and remarks on the literature.
SHC~was supported by NSF grant DMS-2246845.  AK~was supported by
NSF grant DMS-2302641, BSF grant 2020119 and a Simons Fellowship.
IP~was supported by NSF grant CCF-2302173.  This paper
was written when AK was visiting the Institute for Advanced Study.
We are grateful for the hospitality.
}

\vskip1.1cm


{\footnotesize

\vskip.6cm
}

\end{document}